\theoremstyle{plain}
\newtheorem{theorem}{Theorem}
\newtheorem{lemma}[theorem]{Lemma}
\newtheorem{definition}[theorem]{Definition}
\newtheorem{example}[theorem]{Example}
\newtheorem*{remark}{Remark}
\newcommand{\F}{\ensuremath{\mathbb{F}}}
\newcommand{\Z}{\ensuremath{\mathbb{Z}}}
\newcommand{\Q}{\ensuremath{\mathbb{Q}}}
\DeclareBoldMathCommand{\be}{e}
\DeclareBoldMathCommand{\bx}{x}
\DeclareBoldMathCommand{\by}{y}
\title{Algebraic Construction of Quasi-split Algebraic Tori}
\author[1]{Armin Jamshidpey}
\author[2]{Nicole Lemire}
\author[1]{\'Eric Schost}
\affil[1]{David Cheriton School of Computer Science, University of Waterloo}
\affil[2]{Department of Mathematics, University of Western Ontario}
\begin{document}

\maketitle

\begin{abstract}

The main purpose of this work is to give a constructive proof for a 
particular case of the no-name lemma. Let $G$ be a
finite group, $K$ be a field, $L$ be a permutation $G$-lattice 
and $K[L]$ be the group algebra of $L$ over
$K$. The no-name lemma asserts that the invariant field of the
quotient field of $K[L]$, $K(L)^G$ is a purely transcendental
extension of $K^G$. In other words, there exist $y_1, \ldots , y_n$
which are algebraically independent over $K^G$ such that $K(L)^G \cong
K^G(y_1, \ldots , y_n)$.  We define elements $\lbrace y_1, \ldots, y_n
\rbrace \subset K[L]^G$ with the desired properties, in the case when
$G$ is the Galois group of a finite extension $\mathrm{Gal}(K/F)$, and
$L$ is a sign permutation $G$-lattice.
\end{abstract}

\section{Introduction}

An algebraic $F$-torus $T$ is an algebraic group defined over a field
$F$ which {\em splits} over an algebraic closure $\bar F$ of $F$, that
is, which is isomorphic to a torus (a finite product of copies of the
multiplicative group $\mathbb{G}_m$) over $\bar{F}$. In general,
$\bar{F}$ is not the smallest field over which $T$ splits: it is known
that an algebraic $F$-torus $T$ splits over a finite Galois extension
of $F$. There is a unique minimal such extension, say $K$; if $G =
\mathrm{Gal}(K/F)$, then $G$ is called the splitting group of $T$. For
more details, 
see~\cite[p. 27]{Voskresenskii}.

For a finite group $G$, a $G$-lattice $L$ is a free $\Z$-module $A$ of
finite rank, together with a group homomorphism $G \longrightarrow
{\rm Aut}(A)$ (the group of automorphisms of $A$). Given a module
basis of $A$, any group homomorphism $G \longrightarrow {\rm
  GL}(n,\Z)$, with $n={\rm rank}(A)$, gives such an action.  If $K$ is
a field, the group algebra $K[L]$ of $L$ over $K$ is isomorphic to the
$K$-algebra of Laurent polynomials $K[x_1^{\pm 1},\dots,x_n^{\pm 1}]$,
for some indeterminates $x_1,\dots,x_n$.  If $K$ is equipped with an
action of $G$ (that is, a $G$-field), we can extend the action of $G$
on lattice $L$ to an action on $K[L]$; the ring $K[L]^G$ of {\em
  multiplicative invariants} consists of those elements in $K[L]$
invariant under the action of $G$. The fraction field $K(L)$ of $K[L]$
is isomorphic to $K(x_1,\dots,x_n)$, and the subfield of invariants
under the action of $G$ is written $K(L)^G$.

It is known that there is a duality between the category of algebraic
tori with splitting group $G$ and $G$-lattices. For a given algebraic
torus $T$ with splitting group $G$, its character module
$\mathrm{Hom}(T,\mathbb{G}_m)$ is a $G$-lattice. Conversely, if $L$ is
a $G$-lattice, with $G={\rm Gal}(K/F)$ for some finite Galois
extension $K/F$, then $T=\mathrm{Spec}(K[L]^G)$ is an algebraic
$F$-torus with splitting group $G$, coordinate ring $K[L]^G$ and
function field $K(L)^G$.

A $G$-lattice $L$ is called {\em permutation} (resp.\ {\em sign
  permutation}) if it has a $\Z$-basis which is permuted (resp.\ up to
sign changes) by $G$. In particular, an algebraic torus whose
corresponding $G$-lattice is a permutation lattice is called a {\em
  quasi-split torus}. Quasi-split algebraic tori 
  are characterized as being
representable as a direct product of groups of the form
$R_{E/F}(\mathbb{G}_m)$, where $R_{E/F}$ is the Weil restriction with 
respect to a finite separable field extension $E/F$.
Note that $R_{E/F}(\mathbb{G}_m)$ is a $F$-group scheme with $F$-points $E^{\times}$ and 
character lattice $\Z[G/H]$, where $K/F$ is a Galois closure of $E/F$, $G={\rm Gal}(K/F)$ and $H$ is the 
subgroup of $G$ which fixes the subfield $E$.
The Weil restriction here is not necessarily with respect to a finite Galois extension $K/F$
as this would only produce character lattices which are direct sums of the group ring.


A specific case of the no-name lemma asserts that if $L$ is a
permutation $G$-lattice and $K$ is a $G$-field, then $K(L)^G$ is
rational over $K^G$~\cite[Chapter~9.4]{Lorenz}; in particular, with
$G={\rm Gal}(K/F)$, $K(L)^G$ is $F$-rational. The term "no-name lemma"
was first used by Dolgachev in \cite{Dolgachev}, expressing the fact
that many researchers discovered the result independently. It is
actually more general than the stated version here; see
\cite[p.\ 6]{Dolgachev}, \cite[Section 3.2]{Sansuc}, \cite[Proposition
  1.3]{Lenstra}, \cite[Remark 2.4]{Domokos} and \cite[Proposition
  1.1]{EndoMiyata}.  In this paper, we give a constructive proof of
the particular case described above (and of a slight generalization
thereof, using signed permutation matrices), by exhibiting a basis for
such a field of invariants.  In concrete terms, we start from a
subgroup of $\mathrm{GL}(n,\Z)$ and describe the field of functions of
an associated torus.

\begin{definition}\label{Assumption}
  Let $G$ be a finite subgroup of $\mathrm{GL}(n,\Z)$.  The
  $G$-lattice $L_G$ corresponding to $G$ is the rank $n$ lattice
  $\Z^n=\{[a_1,\dots,a_n]^T: a_i\in \Z\}$ on which $G$ acts naturally by left-multiplication.
  Note that $\Z^n$ has a standard basis $\{\be_i: i=1,\dots,n\}$, 
  where $\be_i$ is the column vector
  $[\delta_{i,j}, i=1,\dots,n]^T$.
\end{definition}

Suppose further that we are given an isomorphism $\iota: G \to
\mathrm{Gal}(K/F)$, for some finite Galois extension $K/F$ (in what
follows, we simply say that $K/F$ has Galois group $G$). Then, through
the identification $K[x_1^{\pm 1}, \ldots , x_n^{\pm 1}]\simeq K[L_G]$,
$K[x_1^{\pm 1}, \ldots , x_n^{\pm 1}]$ is equipped with the $G$-action
defined as follows:
\begin{itemize}
\item for $g$ in $G$ and $\alpha$ in $K$, we write
  $g(\alpha)=(\iota(g))(\alpha)$ (so $G$ acts as the Galois group on
  $K$);
\item for $g$ in $G$ and $j=1,\dots,n$, $g(x_i) = \prod_{j=1}^{n}
  x_j^{g_{j,i}}$, where $g_{j,i}$ is the $(j,i)$th -entry of $g$
 (so that we also have $g(\be_i) =
  \sum_{j=1}^{n} g_{j,i}\be_j$).
\end{itemize}
If we let $T_G$ be the algebraic torus corresponding to $L_G$, then
$T_G$ is an algebraic $F$-torus which splits over $K$, with character
lattice $L_G$ and function field $K(x_1,\dots,x_n)^G$.

Conjugate subgroups of $\mathrm{GL}(n,\Z)$ correspond to isomorphic
lattices, and isomorphic algebraic tori; in particular, for $G$ a
finite subgroup of $\mathrm{GL}(n,\mathbb{Z})$, $L_G$ is a (signed)
permutation lattice if and only if $G$ is conjugate to a group of
(signed) permutation matrices.  Computationally, we do not have an
efficient algorithm at hand to decide whether a given lattice is
(signed) permutation. Hence, in our main results, we will assume that
$G$ is a subgroup of the group $\mathbb{S}_n$ of permutation matrices
of size $n$, or more generally of the group $\mathbb{B}_n$ of signed
permutation matrices of size $n$.  In such a case, for $i$ in
$\{1,\dots,n\}$ and $g$ in $G$, $g(\be_i)=\pm \be_j$ for some index
$j$ in $\{1,\dots,n\}$ (all signs being $+1$ if $G$ is a subgroup of
$\mathbb{S}_n$), and the action of $g \in G$ on $x_i$ is given
by $$g(x_i) = \begin{cases} x_j & \text{if~} g(\be_i) = \be_j
  \\ x^{-1}_j & \text{if~} g(\be_i) = -\be_j. \end{cases}$$

For such groups $G$, the $F$-rationality of the torus $T_{G}$ means
that for $K(x_1,\dots,x_n)$, endowed with the $G$-action we just
described, there exist algebraically independent $y_1,\dots,y_n$ in 
$K(x_1,\dots,x_n)$ such that $K(x_1,\dots,x_n)^G=F(y_1,\dots,y_n)$. 
However, the proofs of the no-name lemma we are aware of
are nonconstructive.  The goal of this paper is to exhibit such 
an algebraically independent set $\{y_i:i=1,\dots,n\}$; we
state two such results. Note that the first result is a special case of
 the general version of the no-name lemma. In particular, this can be 
 applied to get an explicit transcendence basis of the function field 
 of a quasi-split algebraic torus $T_G$, provided $G$ is given as a group 
 of permutation matrices.

In both our theorems, we rely on the notion of a {\em normal element} of
a finite Galois extension $K/F$ with Galois group $G$; we recall that
$\alpha \in K$ is normal if $\alpha$ and all its Galois conjugates
form an $F$-basis of $K$.  Any finite Galois extension admits a normal
element~\cite[Theorem 6.13.1]{Lang}; there exist algorithms to
construct one, in characteristic zero~\cite{Girstmair} or in positive
characteristic~\cite{Giesbrecht,Poli}.

\begin{theorem}\label{nonamenonsign}
  Let $G$ be a subgroup of $\mathbb{S}_n$, let $K/F$ be a finite
  Galois extension with Galois group $G$, and let $\alpha \in K$ be a
  normal element for $K/F$. Then $ K(x_1,\ldots ,
  x_n)^G=F(y_1,\dots,y_n)$, with
  $$y_i =\sum_{g \in G} g(\alpha x_i), \quad i = 1,\dots,n.$$
\end{theorem}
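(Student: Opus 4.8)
The plan is to prove the equality by a degree count. First I would check that each $y_i$ is genuinely $G$-invariant: for $h\in G$, reindexing the sum over the group gives $h(y_i)=\sum_{g\in G}(hg)(\alpha x_i)=y_i$, so $y_i\in K(x_1,\dots,x_n)^G$ (in fact $y_i\in K[x_1^{\pm1},\dots,x_n^{\pm1}]^G$), and hence $F(y_1,\dots,y_n)\subseteq K(x_1,\dots,x_n)^G$. Since $G$ acts faithfully on $K(x_1,\dots,x_n)$ (the permutation action on the $x_i$ is already faithful, as $G\hookrightarrow\mathbb{S}_n$), Artin's theorem shows that $K(x_1,\dots,x_n)/K(x_1,\dots,x_n)^G$ is Galois of degree $|G|$. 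Thus from the tower $F(y_1,\dots,y_n)\subseteq K(x_1,\dots,x_n)^G\subseteq K(x_1,\dots,x_n)$ it suffices to establish the single identity $[K(x_1,\dots,x_n):F(y_1,\dots,y_n)]=|G|$, since this forces $[K(x_1,\dots,x_n)^G:F(y_1,\dots,y_n)]=1$.

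The key reduction is the claim that $K(x_1,\dots,x_n)=K(y_1,\dots,y_n)$. Writing $g(x_i)=x_{\sigma_g(i)}$ for the permutation $\sigma_g$ attached to $g$, we have $y_i=\sum_{g\in G}g(\alpha)\,x_{\sigma_g(i)}$, so that the column vector $(y_1,\dots,y_n)^T$ equals $M\,(x_1,\dots,x_n)^T$ for the matrix $M\in K^{n\times n}$ with $M_{i,j}=\sum_{g:\,g\be_i=\be_j}g(\alpha)$. Up to transposition this matrix is $N:=\sum_{g\in G}g(\alpha)\,g$, a $K$-linear combination of the permutation matrices of $G$. If $N$ is invertible over $K$, then each $x_j$ is a $K$-linear combination of the $y_i$, giving $K(x_1,\dots,x_n)=K(y_1,\dots,y_n)$; moreover the $y_i$ are then $K$-linearly independent linear forms in the $x_i$, hence algebraically independent over $K$, and a fortiori over $F$.

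I expect the invertibility of $N=\sum_{g\in G}g(\alpha)\,g$ to be the main obstacle, and it is exactly here that normality of $\alpha$ is used. I would prove it by Galois descent. Regard $W=K^n=K\otimes_F V$, where $V=F^n$ carries the permutation action of $G$, equipped with the semilinear $G$-action $g\star(c\otimes v)=g(c)\otimes g(v)$; the columns of $M$ are the coordinate vectors of the invariant elements $\tilde y_i=\sum_{g\in G}g(\alpha)\otimes g\be_i\in W^G$. The normal basis theorem provides an $F[G]$-isomorphism $K\cong F[G]$ sending $g(\alpha)\mapsto g$, and under the standard untwisting $F[G]\otimes_F V\to F[G]\otimes_F V,\ g\otimes v\mapsto g\otimes g^{-1}v$ of the diagonal action (which intertwines it with the action on the first factor alone), $\tilde y_i$ corresponds to $\bigl(\sum_{g\in G}g\bigr)\otimes\be_i$. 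As $\sum_{g}g$ spans the invariants of the regular module and the $\be_i$ form a basis of $V$, these form an $F$-basis of $W^G$; since for a semilinear action any $F$-basis of $W^G$ is a $K$-basis of $W$ (Speiser's theorem), the $\tilde y_i$ are a $K$-basis of $W$ and $N$ is invertible. Equivalently, $\det N$ is a specialization of the Frobenius group determinant, nonvanishing precisely because the conjugates of $\alpha$ are an $F$-basis of $K$.

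Finally I would assemble the degree count. Algebraic independence of the $y_i$ over $K$ makes $F(y_1,\dots,y_n)$ and $K(y_1,\dots,y_n)$ rational function fields with $[K(y_1,\dots,y_n):F(y_1,\dots,y_n)]=[K:F]=|G|$, an $F$-basis of $K$ surviving the base change to the $y$-variables. Combined with $K(x_1,\dots,x_n)=K(y_1,\dots,y_n)$ from the second paragraph, this gives $[K(x_1,\dots,x_n):F(y_1,\dots,y_n)]=|G|$, and by the first paragraph $F(y_1,\dots,y_n)=K(x_1,\dots,x_n)^G$, as desired.
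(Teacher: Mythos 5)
Your proof is correct, but it takes a genuinely different route from the paper's at the crucial step. Both arguments reduce the theorem to the invertibility of the coordinate matrix expressing the $y_i$ in terms of the $x_j$ (your $N$, up to transpose), but the paper proves invertibility by an elementary, self-contained lemma: if the Galois group permutes the columns of a matrix up to sign and the entries of the first column are $F$-linearly independent, then the matrix is invertible (proved by a minimal-support argument on a putative left-nullspace vector, generalizing the classical Moore-matrix fact). Because that lemma needs the first column's entries to be $F$-linearly independent --- which fails when some coset set $G_{i,1}$ is empty --- the paper must first assume that $G$ acts transitively on the basis vectors, and then treat the general case by decomposing $L_G$ into orbit sublattices and taking the compositum of the corresponding invariant fields. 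Your Galois-descent argument (normal basis theorem, the untwisting $g\otimes v\mapsto g\otimes g^{-1}v$ identifying the diagonal action on $F[G]\otimes_F V$ with the action on the first factor, and Speiser's theorem that an $F$-basis of $W^G$ is a $K$-basis of $W$) requires no transitivity at all: it shows $\sum_{g\in G}g(\alpha)\,g$ is invertible for an arbitrary permutation action defined over $F$, so you obtain the theorem in one stroke with no orbit decomposition and no case split. What the paper's route buys is elementarity and self-containedness; what yours buys is uniformity and a conceptual identification of the coordinate matrix as a twisted group-ring element (a specialization of the group determinant) whose invertibility is exactly additive Hilbert 90. Your endgame also differs mildly: you conclude via Artin's theorem and the degree count $[K(x_1,\ldots,x_n):F(y_1,\ldots,y_n)]=|G|$, whereas the paper uses the identity $K(y_1,\ldots,y_n)^G=K^G(y_1,\ldots,y_n)$ directly once $K(x_1,\ldots,x_n)=K(y_1,\ldots,y_n)$ is known; both conclusions are standard and correct.
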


Our second statement is similar, but deals with the more general case
of signed permutations (if the group $G$ below happens to be a subgroup of $\mathbb{S}_n$,  
the construction is not the same as in the previous theorem).

\begin{theorem}\label{nonamesign}
  Let $G$ be a subgroup of $\mathbb{B}_n$, let $K/F$ be a finite
  Galois extension with Galois group $G$, and let $\alpha \in K$ be a
  normal element for $K/F$. Then
  $K(x_1,\ldots, x_n)^G = F(y_1, \ldots, y_{n}),$
  with $$y_i = \sum_{g \in G} g\left (\frac{\alpha}{1+x_i}\right), \quad i = 1,\dots,n.$$
\end{theorem}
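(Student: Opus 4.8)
The plan is to establish the two inclusions implicitly through a degree count, after reducing everything to a single linear-algebra statement over $K$. First I would record that each $y_i$ lies in $K(x)^G$: since $y_i=\sum_{g\in G}g(\alpha/(1+x_i))$ is a sum over the whole group, reindexing gives $h(y_i)=y_i$ for every $h\in G$ (equivalently, $y_i=\mathrm{Tr}_{K(x)/K(x)^G}(\alpha/(1+x_i))$). Combined with $F=K^G$, this yields $F(y_1,\dots,y_n)\subseteq K(x)^G$, so the whole content of the theorem is the reverse inclusion, which I will derive from the equality $K(x)=K(y_1,\dots,y_n)$. Hence the real task is to recover the $x_i$ from $K$ and the $y_j$.

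The key simplification is the substitution $w_i:=1/(1+x_i)$; since $x_i=(1-w_i)/w_i$, we have $K(x_1,\dots,x_n)=K(w_1,\dots,w_n)$ with the $w_i$ again algebraically independent over $K$. For each $g$ there is a unique index $k$ with $g(\be_i)=g_{k,i}\be_k$ and $g_{k,i}\in\{+1,-1\}$, and using $1/(1+x_k)=w_k$ together with $1/(1+x_k^{-1})=x_k/(1+x_k)=1-w_k$, each summand $g(\alpha)/(1+g(x_i))$ becomes affine-linear in $w_k$. Collecting terms yields
$$y_i=D_i+\sum_{k=1}^{n}C_{i,k}\,w_k,\qquad C_{i,k}=\sum_{g\in G}g_{k,i}\,g(\alpha)\in K,$$
with $D_i\in K$. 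So $\mathbf y=\mathbf D+C\mathbf w$ with $C=(C_{i,k})\in K^{n\times n}$, and everything reduces to proving that $C$ is invertible over $K$: then $\mathbf w=C^{-1}(\mathbf y-\mathbf D)$ shows $w_k\in K(y_1,\dots,y_n)$, hence $x_k\in K(y_1,\dots,y_n)$ and $K(x)=K(y)$.

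The invertibility of $C$ is the step I expect to be the main obstacle, and where the normality of $\alpha$ is essential. I would read $C$ geometrically: equip $K^n$ with the semilinear $G$-action $\sigma_g(\mathbf v)=g\cdot(g(v_1),\dots,g(v_n))^T$ extending both the Galois action on $K$ and the signed-permutation action on the $\be_i$, and let $P=\sum_{g\in G}\sigma_g$ be the associated trace onto the invariants $(K^n)^G$. A direct computation gives $P(\alpha\be_i)=\sum_k C_{i,k}\be_k$, so $C$ is exactly the coordinate matrix of the family $\{P(\alpha\be_i)\}_{i=1}^{n}$. Expanding each coordinate in the normal basis $\{g(\alpha):g\in G\}$ of $K/F$, the coefficient of $\alpha=e(\alpha)$ in $C_{i,k}$ is the $(k,i)$-entry of the identity matrix, namely $\delta_{k,i}$; thus the "$\alpha$-component" of $P(\alpha\be_i)$ is exactly $\be_i$, and this $F$-linear projection shows the $P(\alpha\be_i)$ are $F$-linearly independent in $(K^n)^G$. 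By Galois descent (Speiser's theorem, valid in every characteristic) the natural map $K\otimes_F(K^n)^G\to K^n$ is an isomorphism, so $\dim_F(K^n)^G=n$ and any $F$-basis of $(K^n)^G$ is a $K$-basis of $K^n$. Therefore $\{P(\alpha\be_i)\}$ is a $K$-basis of $K^n$ and $C$ is invertible; it is precisely the normal basis theorem that makes the $\alpha$-projection land on the identity matrix.

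Finally I would combine $K(x)=K(y_1,\dots,y_n)$ with the tower $F(y)\subseteq K(x)^G\subseteq K(x)$. Since $K(x)/K(x)^G$ is Galois with group $G$, $[K(x):K(x)^G]=|G|$, while the compositum bound gives $[K(y):F(y)]\le[K:F]=|G|$. As $K(y)=K(x)$, this forces $|G|\cdot[K(x)^G:F(y)]=[K(x):F(y)]\le|G|$, hence $K(x)^G=F(y_1,\dots,y_n)$. In particular $\mathrm{tr.deg}_F F(y)=\mathrm{tr.deg}_F K(x)^G=n$, so the $y_i$ are algebraically independent over $F$, completing the argument.
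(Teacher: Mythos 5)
Your proposal is correct, and its central step is genuinely different from the paper's. Both proofs share the same skeleton: the substitution $w_i=1/(1+x_i)$ (the paper's $z_i$), the observation that each $y_i$ is affine-linear in the $w_k$ with coordinate matrix $C$ (the paper's $M$, with your $C_{i,k}=\sum_g g_{k,i}\,g(\alpha)$ agreeing with the paper's $m_{i,k}=\sum_{g\in G^+_{i,k}}g(\alpha)-\sum_{g\in G^-_{i,k}}g(\alpha)$), and the reduction of everything to the invertibility of that matrix. Where you diverge is in how invertibility is proved. The paper proves a self-contained linear-algebra lemma (Lemma~\ref{signdet}, generalizing Moore-matrix facts): a minimal-support vector in the left nullspace is shown to be $G$-fixed, hence in $F^n$, contradicting the $F$-linear independence of the entries of the first column. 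That lemma forces a transitivity hypothesis --- without it the first column contains zeros --- so the paper must first treat the transitive case and then assemble the general case from an orbit decomposition and a compositum argument. You instead view the rows of $C$ as $P(\alpha\be_i)$, where $P=\sum_g\sigma_g$ is the trace of the twisted semilinear action $\sigma_g(\mathbf v)=g\cdot(g(v_1),\dots,g(v_n))^T$ on $K^n$, use the normal-basis projection onto the $\alpha$-coefficient to see these $n$ invariant vectors are $F$-linearly independent, and then invoke Galois descent (Speiser's theorem) to conclude they form a $K$-basis of $K^n$. This buys uniformity: your argument never uses transitivity, so the non-transitive case (where $C$ is block diagonal, as in the paper's last example) is handled in one pass with no orbit decomposition. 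What the paper's route buys in exchange is elementarity --- no descent machinery, just a pigeonhole-style nullspace argument --- and a lemma of independent interest. Your closing degree count ($|G|\cdot[K(x)^G:F(y)]\le |G|$ via Artin and the compositum bound) is a valid, essentially equivalent replacement for the paper's assertion that $K(y)^G=K^G(y)$.
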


There are many algorithms for finding the invariant rings for
polynomial invariants~\cite{Sturmfels,Kemper2}. For multiplicative
invariants, the algorithmic landscape is not developed to the same
extent. In \cite{Renault} the author introduced an algorithm to 
compute a generating set for the ring of multiplicative invariants, 
when the acting group is a subgroup of a reflection group. A more 
general algorithm for computing the ring of multiplicative invariants
 is given by Kemper in \cite{Kemper}. It is worth mentioning that although 
both above algorithms may be applied to our problem, they 
will not necessarily produce a transcendence basis of the invariant field.

\paragraph{Acknowledgements.} A large part of this work was done while the first 
author was a PhD student at the University of Western Ontario. The second and 
third authors are supported by an NSERC Discovery Grant. We thank Gregor Kemper and 
Lex Renner for their feedback on an earlier version of this work.


\section{Proofs and examples}

In what follows, we use the following notation: we still write
$\mathbb{S}_n$, resp.\ $\mathbb{B}_n$, for the groups of permutation,
resp.\ signed permutation matrices of size $n$. Note that $\mathbb{D}_n$, the group of  diagonal matrices 
in $\mathrm{GL}_n(\Z)$ (diagonal matrices with entries in $\{\pm 1\}$) is a normal subgroup of $\mathbb{B}_n$
such that $\mathbb{B}_n=\mathbb{D}_n\rtimes \mathbb{S}_n$ is a semidirect product.
We let $\mathfrak{S}_n$ be the symmetric group of size $n$.  Since $\mathbb{S}_n$ is naturally isomorphic to 
$\mathfrak{S}_n$, we see that there is a natural group homomorphism  
$\rho: \mathbb{B}_n \to \mathfrak{S}_n$ 
obtained by mapping a signed permutation matrix to the permutation
$\rho_g$ such that $\rho_g(i)=j$, where $j$ is the index of the
unique non-zero entry in the $i$th column of $g$.  Note that the kernel of the group homomorphism is $\mathbb{D}_n$. 
Hence, in terms of
the action defined in the previous section, for $g$ in $\mathbb{B}_n$
and for all $i,j$ in $\{1,\dots,n\}$, we have $g(x_i) =
x_{\rho_g(i)}^{\pm 1}$.  For $i,j$ as above, we also denote by
$G^{\pm}_{i,j}$ the set of all $g$ in $G$ such that $g(x_i)=x^{\pm 1}_j$, that is,
such that $\rho_g(i)=j$.

We start with a lemma that generalizes known facts about Moore
matrices over finite fields (see Example~\ref{ex:Moore} below). Let
$G$ be a subgroup of $\mathbb{B}_n$ and let $K/F$ be a finite Galois
extension with Galois group isomorphic to $G$, as in the previous
section. Through this isomorphism, $G$ acts on (column) vectors
entrywise: for $g \in G$ and a column vector in $K^m$, written as $C =
\left[\mu_{1} \,\,\, \mu_{2} \,\,\, \cdots \,\,\, \mu_{m}\right]^T$,
we define $g(C) = \left[g(\mu_{1}) \,\,\, g(\mu_{2}) \,\,\, \cdots
  \,\,\, g(\mu_{m})\right]^T$.

Let then $M$ be in $M_{n,n}(K)$. For such a matrix, and for
$i=1,\dots,n$, its $i$th column is written $M_i=\left[\mu_{1,i} \,\,\,
  \mu_{2,i} \,\,\, \cdots \,\,\, \mu_{m,j}\right]^T$.  We say that $G$
{\em permutes the columns of $M$ up to sign} if for $g$ in $G$ and $i$
in $\{1,\dots,n\}$, we have $g(M_i)= \pm M_{\rho_g(i)}$.
 
\begin{lemma}\label{signdet}
   Let $K/F$ be a finite Galois extension with Galois group $G$. Let
   $M$ be in $M_{n,n}(K)$ and assume that $G$ permutes the columns of
   $M$ up to sign. Assume also that the entries of the first column
 of $M$ are $F$-linearly independent. Then $M$ is invertible.
\end{lemma}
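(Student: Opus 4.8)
The plan is to prove that $M$ is invertible by showing $\det M \neq 0$, and the crux is that the hypotheses secretly force the $G$-action on the columns to be transitive; once this is seen, $M$ becomes a kind of generalized Moore matrix (cf.\ the cited Moore example) whose determinant factors as a product of Galois translates times a ``character determinant'' that is visibly nonzero.

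First I would examine the stabilizer $H = \{g \in G : \rho_g(1) = 1\}$ of the first column. For $g \in H$ we have $g(M_1) = \pm M_1$, so each entry $\mu_{i,1}$ satisfies $g(\mu_{i,1}) = \epsilon(g)\,\mu_{i,1}$ with a sign $\epsilon(g)$ independent of $i$; hence every ratio $r_i := \mu_{i,1}/\mu_{1,1}$ (well defined, since $F$-independence forces $\mu_{1,1}\neq 0$) lies in the fixed field $K^H$. Because the $\mu_{i,1}$ are $F$-linearly independent, so are the $n$ elements $r_1,\dots,r_n \in K^H$, whence $[K^H:F]\ge n$; since $[K^H:F]=[G:H]$ equals the orbit size of the first column (which is at most $n$), all these quantities equal $n$. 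Thus the column action is transitive and $\{r_1,\dots,r_n\}$ is an $F$-basis of $K^H$.

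Next I would choose coset representatives $g_1 = e, g_2, \dots, g_n$ of $G/H$ with $\rho_{g_k}(1) = k$, so that $M_k = \eta_k\, g_k(M_1)$ for signs $\eta_k \in \{\pm 1\}$. Then $\det M = \bigl(\prod_k \eta_k\bigr)\det\bigl(g_k(\mu_{i,1})\bigr)_{i,k}$, and writing $\mu_{i,1} = \mu_{1,1}\,r_i$ and pulling the factor $g_k(\mu_{1,1})$ out of the $k$th column gives
$$\det M = \Bigl(\prod_k \eta_k\Bigr)\Bigl(\prod_k g_k(\mu_{1,1})\Bigr)\det\bigl(g_k(r_i)\bigr)_{i,k}.$$
The first two factors are nonzero (signs, and a product of Galois images of $\mu_{1,1}\neq 0$), so everything reduces to the nonvanishing of $\det\bigl(g_k(r_i)\bigr)_{i,k}$.

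Finally, I would observe that the restrictions $g_k|_{K^H}$ are exactly the $n$ distinct $F$-embeddings of the separable extension $K^H/F$ (two elements of $G$ restrict equally to $K^H$ iff they lie in the same coset of $H$), while $\{r_i\}$ is an $F$-basis of $K^H$. Nonvanishing of $\det\bigl(g_k(r_i)\bigr)$ is then the standard fact that the matrix of a basis under all embeddings of a separable extension is invertible, equivalently Dedekind's linear independence of the distinct characters $g_k : (K^H)^\times \to K^\times$: a column dependence would yield a nontrivial $K$-relation $\sum_k c_k\, g_k(r) = 0$ valid for all $r \in K^H$, a contradiction. I expect the main obstacle to be the first step: recognizing that transitivity of the column action is not an extra hypothesis but is \emph{forced} by the $F$-linear independence of the first column, and pinning down the resulting identification of $\{r_i\}$ with a basis of $K^H$. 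Once that structural reduction is in place, the remainder is routine Galois theory.
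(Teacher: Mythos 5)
Your proof is correct, but it takes a genuinely different route from the paper. The paper argues by contradiction with a minimal-support vector: it takes a nonzero left-nullspace vector $\bm{x}$ of $M$ with the fewest nonzero entries, normalizes one entry to $1$, and observes that for each $g\in G$ the vector $g(\bm{y})-\bm{y}$ is again in the left nullspace but with strictly smaller support, hence zero; so $\bm{y}$ is $G$-invariant, lies in $F^n$, and the relation $\bm{y}^T M_1=0$ contradicts the $F$-independence of the first column. That argument is very short, needs nothing beyond $K^G=F$, and does not even use that $g\mapsto\rho_g$ is a homomorphism or that the column action is transitive. Your argument instead extracts the hidden structure: you show transitivity is \emph{forced} by degree-counting over the stabilizer $H$ (this is precisely the observation the paper records separately in the Remark following the lemma, with essentially your $[K^H:F]=[G:H]$ computation), and you then factor $\det M$ as a product of signs, Galois conjugates of $\mu_{1,1}$, and the embedding-matrix determinant $\det(g_k(r_i))$, which is nonzero by Dedekind's independence of characters. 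What your route buys is an explicit Moore-style factorization of the determinant, making the connection to Example~\ref{ex:Moore} transparent, and it packages the transitivity remark into the proof itself; what it costs is heavier machinery (the fundamental theorem of Galois theory to identify $\mathrm{Gal}(K/K^H)=H$ and count embeddings, plus Dedekind's lemma) and a genuine reliance on $\rho:G\to\mathfrak{S}_n$ being a group homomorphism so that $H$ is a subgroup and the coset bookkeeping works --- this holds in the paper's setting ($G\le\mathbb{B}_n$), but it makes your proof slightly less robust than the paper's, which survives with weaker hypotheses. All the individual steps you give (nonvanishing of $\mu_{1,1}$, the sign $\epsilon(g)$ cancelling in the ratios $r_i$, the coset representatives realizing each column, and the passage from a column dependence to a relation $\sum_k c_k g_k(r)=0$ for all $r\in K^H$ via $F$-linearity) are sound.
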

\begin{proof}
  Assume by contradiction that there is a non-zero vector in the left
  nullspace of $M$; take $\bx \in K^n$ to be a vector with the minimum number of
  non-zero entries among the non-zero left nullspace elements
  (that is, such that $\bx^T M=0$).
  Let $k \in \{1,\dots,n\}$ be such that $x_k \neq 0$ and let
  $\by=\frac 1{x_k} \bx \in K^n$, so that $y_k = 1$, and $\by$ is
  still in the left nullspace of $M$. 
  
  For $i$ in $\{1,\dots,n\}$, we have the equality $\by^T M_i = 0 $,
  where $M_i$ is the $i$th column of $M$. For $g$ in $G$, we deduce
  $g(\by^T M_i) = g(\by)^Tg(M_i) = \pm g(\by)^T M_{\rho_g(i)} = 0$. Since
  this is true for all $i$, we obtain that $g(\by)$ is in the
  left nullspace of $M$ as well. This further implies that
  $\by'=g(\by)-\by$ is in the left nullspace of $M$. However, since
  $y_k=1$, $g(y_k)=1$, so that $y'_k=0$. By construction of $\bx$, this
  implies that $\by'=0$, so that $g(\by)=\by$. 

  Since this is true for all $g$, we deduce that $\by$ is in $F^n$.
  Then, the relation $\by^T M_1=0$ implies that $\by=0$, a
  contradiction.
 \end{proof} 

\begin{remark}
The hypotheses in the above lemma imply that the induced unsigned
action of the group $G$ on the columns of $M$ is a transitive action.
Let $H=\mathrm{Stab}_G(M_1)$. Then $h\in H$ fixes $M_1$ and so $M_1\in
(K^H)^n$.  But then the $n$ entries of $M_1$ are contained in $K^H$
and are also $F$-linearly independent.  The size of an $F$-linearly
independent set in $K^H$ can be at most $[K^H:F]$.  Then $n\le
[K^H:F]=[K^H:K^G]=[G:H]\le n$ implies that $[G:H]=n$.  But then the
orbit of $M_1$ under the action of $G$ has size
$[G:H]=[G:\mathrm{Stab}_G(M_1)]=n$.
\end{remark}  

\begin{example}\label{ex:Moore}
  Let $F=\F_q$, for some prime power $q$, and let $K=\F_{q^n}$.  The
  Galois group of $K/F$ is cyclic of size $n$, generated by the Frobenius
  map $x \mapsto x^q$. Let then $(\alpha_1,\dots,\alpha_n)$ be in $K$,
  and consider the {\em Moore matrix} $M=[m_{i,j}]_{1 \le i,j \le n}$, with
  $m_{i,j} = \alpha_i^{q^{j-1}}$. The Frobenius map permutes the
  columns of $M$, and we recover the fact that if
  $(\alpha_1,\dots,\alpha_n)$ are $F$-linearly independent, $M$ is
  invertible~\cite[Corollary~1.3.4]{Goss}.
\end{example}

We can now prove our first result.

\begin{proof}[Proof of Theorem~\ref{nonamenonsign}]
  We first prove the result under the additional assumption that $G$
  acts transitively on $L_G$.  The elements $(y_1,\dots,y_n)$, with
  $y_i = \sum_{g \in G} g(\alpha x_i)$ as defined in the theorem, are
  invariant under the action of $G$. We will show below that
  $K(x_1,\dots,x_n)=K(y_1,\dots,y_n)$; this will prove that
  $K(x_1,\dots,x_n)^G=F(y_1,\dots,y_n)$, since
  $K(x_1,\dots,x_n)=K(y_1,\dots,y_n)$ implies that
  $K(y_1,\dots,y_n)^G=K^G(y_1,\dots,y_n)=F(y_1,\dots,y_n)$.

  For $i,j$ in $\{1,\dots,n\}$, let $G_{i,j}= \lbrace g \in G: g(x_i)
  = x_j \rbrace$, so that we can rewrite $y_i$ as $$y_i = \sum^n_{j
    =1}\sum_{g \in G_{i,j}}g(\alpha)x_j, \,\,\, i = 1, \ldots, n.$$
  Note that $G_{i,i}=\mathrm{Stab}_G(x_i)$ and so is a subgroup of $G$. Since the action of $G$ on $L_G$ is 
  transitive, $G_{i,j}$ is
  non-empty for every $1 \leq i,j \leq n$. Take such indices $i,j$,
  and fix some $g_{i,j}$ in $G_{i,j}$. If $g \in G_{i,j}$, then
  $g^{-1}_{i,j}g(x_i) = x_i$ shows that $g$ is in
  $g_{i,j}G_{i,i}$. Since we also have
  $g_{i,j}G_{i,i} \subseteq G_{i,j}$, we see that
  $G_{i,j} = g_{i,j}G_{i,i}$.


  We now show that the matrix $M$ with $i$th row the coordinate vector
  of $y_i$ with respect to the $K$-basis $\lbrace x_1, \ldots x_n
  \rbrace$ is invertible. The matrix $M$ has entries $m_{i,j} =
  \sum_{g \in G_{i,j}}g(\alpha)$, $i,j = 1, \ldots, n$. We will apply
  Lemma~\ref{signdet} to show that $M$ is invertible, which is sufficient to 
  prove the theorem.

  We check the hypothesis of the lemma. First, let $\rho: G \to
  \mathfrak{S}_n$, $\rho(g) = \rho_g$ be the group homomorphism that
  corresponds to the action of $G$ on $(x_1, \ldots , x_n)$, so that
  $\rho_g(i) = j$ if and only if $g(x_i) = x_j$ for all $1 \leq
  i,j\leq n$. We will show that the columns of $M$ are permuted by the
  action of $G$. Let thus $h$ be in $G$. Note that for $g$ in
  $G_{i,j}$, $hg$ is in $G_{i,\rho_h(j)}$.
  Then since $G_{i,j}=g_{i,j}G_{i,i}$ is a left coset of $G_{i,i}$ where $g_{i,j}$
is an arbitrary element of $G_{i,j}$, we see that $hG_{i,j}=hg_{i,j}G_{i,i}=G_{i,\rho_h(j)}$ 
since $hg_{ij}\in G_{i,\rho_h(j)}$.  We then get
  $$h(m_{i,j}) = \sum_{g \in G_{i,j}}hg(\alpha) = \sum_{\sigma \in
    G_{i, \rho_{h}(j)}}\sigma (\alpha) = m_{i,\rho_{h}}(j).$$ This
  shows that $h(M_j) = M_{\rho_h(j)}$ for all $j = 1, \ldots, n$, so
  that $G$ permutes the columns of $M$. 
  
  Finally, the first column $M_1$ has entries $\sum_{g \in
      G_{i,1}}g(\alpha)$, $i = 1, \ldots,n$.  Since $\alpha$ is a
  normal element of the Galois extension $K/F$ with Galois group $G$,
  the set $\lbrace g(\alpha): g \in G \rbrace$ is $F$-linearly
  independent. Since $G = \sqcup^n_{i =1}G_{i,1}$ is a disjoint union,
  and all $G_{i,1}$ are non-empty,
  the set
  $$\left\lbrace \sum_{g \in G_{i,1}}g(\alpha),\ i = 1, \ldots, n
  \right\rbrace$$ is $F$-linearly independent as well.  So
  Lemma~\ref{signdet} applies, and we conclude that $M$ is invertible,
  as claimed.

  \medskip 
  
  We can now give the proof of our claim in the general case.  Let
  $\be_1, \ldots , \be_n$ be the standard basis of $L_G$, and let
  $\{\be_{j_k} \mid k = 1, \ldots, r\}$ and correspondingly $\{x_{j_k}
  \mid k = 1, \ldots, r\}$ be a complete set of $G$-orbit
  representatives among the basis vectors, and the indeterminates
  $x_1, \ldots, x_n$ respectively. Then $L_k = \oplus_{ \be_i \in G
    \be_{j_{k}} } \Z \be_i$ is a transitive permutation $G$-lattice
  for each $k = 1, \ldots, r$, and $K(L_k) = K(x_i \mid x_i \in
  Gx_{j_k})$.
  
  The lattice $L_G = \oplus^r_{k =1} L_k$ is a direct sum of transitive
  permutation $G$-lattices, 
  so that $K(x_1,\dots,x_n)$
  is the
  compositum of the fields $K(L_k)$, $k=1,\dots,r$. Thus, using the 
  result established in the transitive case, we obtain
  $K(x_1,\dots,x_n)^G=F(y_i \mid x_i \in Gx_{j_k}, k=1,\dots,r)$, where
  for all $k$ and for $x_i \in Gx_{j_k}$, we have $y_i=\sum_{g \in G}
  g(\alpha x_i)$.
\end{proof}

\begin{example}
Let $K$ be the splitting field of $x^4-2$ over $F=\Q$. Then
$\mathrm{Gal}(K/\Q) \cong D_8$, $K = \Q(\theta,i)$, with $\theta =
\sqrt[4]{2}$, and $\lbrace 1, \theta, \theta^2, \theta^3, i, i\theta,
i\theta^2, i\theta^3\rbrace$, is a $\Q$-basis for $K$. Let
$n=4$, let
$G\leq \mathrm{GL}(4,\Z)$ be generated by
$$
r = \begin{bmatrix}
0&0&0&1\\
1&0&0&0\\
0&1&0&0\\
0&0&1&0
\end{bmatrix} \,\,\,\, \text{and}\,\,\,\,
s = \begin{bmatrix}
1&0&0&0\\
0&0&0&1\\
0&0&1&0\\
0&1&0&0
\end{bmatrix}.
$$ and let $(x_1,\dots,x_4)$ be new indeterminates, on which $G$ acts as in
Definition~\ref{Assumption}; this action is transitive. One can verify
that $G$ is isomorphic to $\mathrm{Gal}(K/\Q)$; through this isomorphism,
the action of $r$ and $s$ on the generators of $K$ is given by
$$
\begin{matrix}
r(i) = i &r(\theta) = i \theta \\
s(i) = -i & s(\theta) = \theta.
\end{matrix}
$$
Now, define $$\alpha = 1+ \theta + \theta^2 + \theta^3 +i + i\theta + i\theta^2 + i\theta^3=
(\sum_{i=0}^3\theta^i)(1+i);$$
this is a normal element in $K/\Q$. 
Note that $G_{1,1}=\mathrm{Stab}_G(x_1)=\langle s\rangle$. Since $r^i(x_1)=x_{i+1}, i=0,1,2,3$,
we see that $G_{i,i}=\mathrm{Stab}_G(x_i)=r^i \mathrm{Stab}_G(x_1)r^{-i}$ and so $G_{1,1}=G_{3,3}=\langle s\rangle$
and $G_{2,2}=G_{4,4}=\langle r^2s\rangle$. Also note that $r^{j-i}\in G_{i,j}$, where we may consider all exponents of 
$r$ modulo 4.
This shows that the elements $(y_1,\dots,y_4)$ of Theorem~\ref{nonamenonsign}, expressed 
on the basis $(x_1,\dots,x_4)$, are given by the coordinate matrix
$$
M= \begin{bmatrix}
(1+s)(\alpha) & r(1+s)(\alpha) & r^2(1+s)(\alpha) & r^3(1+s)(\alpha)\\
r^3(1+r^2s)(\alpha) & (1+r^2s)(\alpha) & r(1+r^2s)(\alpha) & r^2(1+r^2s)(\alpha)\\
r^2(1+s)(\alpha) & r^3(1+s)(\alpha) & (1+s)(\alpha) & r(1+s)(\alpha)\\
r(1+r^2s)(\alpha) & r^2(1+r^2s)(\alpha) & r^3(1+r^2s)(\alpha) & (1+r^2s)(\alpha)
\end{bmatrix}.
$$
Now since $M_j=r^{j-1}M_1$ for all $j=1,2,3,4$, it is clear that the action of 
$r$ permutes the columns.  In particular, $rM_j=M_{j+1}$ (modulo 4).  One can check that $sM_1=M_1$ and since $|\textrm{Stab}_G(M_1)|=8/4=2$, this shows that $\textrm{Stab}_G(M_1)=\langle s\rangle$.
Then $sr^{j-1}M_1=r^{1-j}sM_1$ shows that $sM_j=M_{5-j},j=1,\dots,4$ so $s$ also permutes the columns of $M$.
\end{example} 

\begin{remark}
With the assumptions of the previous theorem, we can actually compute
the coordinate ring of the torus; we obtain $$K[L_G]^G \cong K[x^{\pm
    1}_1, x^{\pm 1}_2, \ldots , x^{\pm 1}_n]^G = F[y_1, \ldots ,
  y_n]_{x_1\cdots x_n},$$ for $y_1,\dots,y_n$ as in the theorem.
Indeed, we have $K[x^{\pm 1}_1, x^{\pm 1}_2, \ldots , x^{\pm 1}_n] =
K[x_1, \ldots , x_n]_{x_1\cdots x_n}.$ We are interested in
$K[L_G]^G$, that is, $\left( K[x^{\pm 1}_1, x^{\pm 1}_2, \ldots ,
  x^{\pm 1}_n] \right)^G = \left(K[x_1, \ldots , x_n]_{x_1\cdots x_n}
\right) ^G.$ The proof of Theorem~\ref{nonamenonsign} shows that
$K[x_1, \ldots , x_n] = K[y_1, \ldots , y_n]$.  On the other hand
since $G$ permutes the $x_i$'s, $x_1\cdots x_n$ is invariant under the
action of $G$, and we can conclude $$\left( K[x_1, \ldots ,
  x_n]_{x_1\cdots x_n}\right)^G = \left( K[y_1, \ldots ,
  y_n]_{x_1\cdots x_n} \right)^G = K^G [y_1, \ldots , y_n]_{x_1\cdots
  x_n} = F[y_1, \ldots , y_n]_{x_1\cdots x_n}.$$ One could further
rewrite $x_1\cdots x_n$ as a degree $n$ homogeneous polynomial in
$y_1,\dots,y_n$ (but the expression obtained this way is not
particularly handy).
\end{remark}

We conclude with the proof of our second main result.  The proof
follows that of Theorem~\ref{nonamenonsign}, the only difference being in
the description of the coordinate matrix $M$.  As in
Theorem~\ref{nonamenonsign}, we first prove the result under the extra
assumption that $G$ acts transitively up to sign on $L_G$.

\begin{proof}[Proof of Theorem~\ref{nonamesign}]
  Assume first that the action of $G$ is transitive (up to sign).
  For $i$ in $\{1,\dots,n\}$, define $z_i = (1+x_i)^{-1} $. Then, for $g
  \in G$, $$g(z_i) = \begin{cases} z_j & \text{if} \,\,\, g(x_i) = x_j
    \\ 1-z_j & \text{if} \,\,\, g(x_i) = x_j^{-1},
  \end{cases}$$ 
  and $K(x_1, \ldots , x_n) = K(z_1, \ldots, z_n).$ The elements $y_i$
  can be rewritten as $y_i = \sum_{g \in G} g ({\alpha}z_i)$, for $i$
  in $\{1, \ldots, n\}$; as before, in order to prove that
  $K(z_1,\ldots, z_n)^G = F(y_1, \ldots, y_{n})$, it is enough to
  prove that $K(y_1,\dots,y_n)=K(z_1,\dots,z_n)$. This will be done by
  writing $(1,y_1,\dots,y_n)$ as $K$-linear combinations of
  $(1,z_1,\dots,z_n)$, and proving that the coordinate matrix is
  invertible.

  For $i,j$ in $\lbrace 1, \ldots , n \rbrace$, let $G^{\pm}_{i,j}$
  be defined as in the preamble of this section, that is 
  $G^{\pm}_{i,j}=\lbrace g \in G : g(z_i) = z_j \,\, \text{or} \,\, g(z_i) = 1-z_j
  \rbrace $. By the transitivity assumption, $G^{\pm}_{i,j}$ is
  non-empty for every $1 \leq i,j \leq n$. Let us further define $G^{+}_{i,j}
  = \lbrace g \in G : g(z_i) = z_j \rbrace$ and $G^{-}_{i,j} =
  \lbrace g \in G : g(z_i) = 1- z_j \rbrace$, so that $G_{i,j}=
  G^{+}_{i,j}\sqcup G^{-}_{i,j}$.  

Note that $G^+_{i,i}=\textrm{Stab}_G(z_i)$.  Both $G^{\pm}_{i,i}$ and $G^+_{i,i}$ are subgroups of $G$.  One can show that the left cosets of $G^+_{i,i}$ in $G$ are the non-empty sets in the collection
$$\{G^+_{i,j}:1\le j\le n\} \cup \{G^-_{i,j}: 1\le j\le n\}$$
By the transitivity assumption, the sets $G^{\pm}_{i,j}\ne \emptyset$
so one can guarantee that either $G^+_{i,j}\ne \emptyset$ or $G^-_{i,j}\ne \emptyset$ (although both are possible).  If $G^+_{i,j}\ne \emptyset$, and $g_{ij}^+$ is an arbitrary element of $G^+_{i,j}$, then 
$G^+_{i,j}=g_{ij}^+G^+_{i,j}$ and if $G^-_{i,j}\ne \emptyset$ and $g_{ij}^-$ is an arbitrary element of $G^-_{i,j}$, then 
$G^-_{i,j}=g_{ij}^-G^+_{i,j}$.
  
  Let $M^*$ be the coordinate matrix of $(1, y_1, \ldots,
  y_n)$ with respect to the $K$-basis $(1, z_1, \ldots, z_n)$; 
  we  have to show that $\det (M^*) \neq 0$.
  By definition, for $i$ in $\{1,\dots,n\}$, we have
  \begin{align*}
y_i = \sum_{g \in G} g ({\alpha}z_i)&= \sum_{j=1}^n \Big(\sum_{g\in G^{+}_{i,j}}g(\alpha)z_j +\sum_{g\in G^{-}_{i,j}}g(\alpha)(1-z_j)\Big)\\
&=\sum_{j=1}^n\sum_{g\in G^{-}_{i,j}}g(\alpha)+ \sum_{j=1}^n\Big(\sum_{g\in G^{+}_{i,j}}g(\alpha) -\sum_{g\in G^{-}_{i,j}}g(\alpha)\Big)z_j.
  \end{align*}
For $i,j \in \lbrace1, \ldots , n \rbrace$, define $m_{i,j} =
\sum_{g\in G^{+}_{i,j}}g(\alpha) -\sum_{g\in G^{-}_{i,j}}g(\alpha)$
and $c_i = \sum_{j=1}^n\sum_{g\in G^{-}_{i,j}}g(\alpha)$. The matrix
$M^*$ is then
$$M^* = \begin{bmatrix}
1 & 0 & \cdots & 0\\
c_1 & m_{1,1} & \cdots & m_{1,n}\\
\vdots & \vdots &  & \vdots\\
c_{n} & m_{n,1} & \cdots	& m_{n,n}
\end{bmatrix}.
$$
Let us write
$$M = \begin{bmatrix}
 m_{1,1} & \cdots & m_{1,n}\\
 \vdots &  & \vdots\\
 m_{n,1} & \cdots	& m_{n,n}
\end{bmatrix}.$$
Since $\det(M^*) = \det (M)$, it is enough to show that the
determinant of $M$ is non-zero; this will be done using
Lemma~\ref{signdet}. We now check that the hypotheses of the lemma are
satisfied.

As before, let $\rho: G \to \mathfrak{S}_n$, $\rho(g) =
\rho_g$ be the group homomorphism that corresponds to the action of
$G$ on $\lbrace z_1, \ldots , z_n \rbrace$, so that $\rho_g(i) = j$ if
and only if $g$ is in $G^{\pm}_{i,j}$. We will show that the columns
$M_1,\dots,M_n$ of $M$ are permuted up to sign by the action of $G$.

Let $h$ be in $G$
and  $i,j$ be in $\lbrace 1, \ldots, n \rbrace$. We 
can then write
$$h(m_{i,j}) = h\Big( \sum_{g\in G^{+}_{i,j}}g(\alpha) -\sum_{g\in
  G^{-}_{i,j}}g(\alpha)\Big) = \sum_{g\in G^{+}_{i,j}}hg(\alpha)
-\sum_{g\in G^{-}_{i,j}}hg(\alpha).$$ 
As in the proof of Theorem~\ref{nonamenonsign}, we have
$hG^{\pm}_{i,j} = G^{\pm}_{i,\rho_h(j)}$, but more precisely, we can write
\begin{align}
\left \{
\begin{array}{ll}
  G^{+}_{i,\rho_{h}(j)}&= hG^{+}_{i,j}\\
G^{-}_{i,\rho_h(j)}&= hG^{-}_{i,j}
\end{array}\right .
\text{~if~} h \in G^+_{j,\rho_h(j)}
\quad\text{and}\quad
\left \{
\begin{array}{cl}
  G^{+}_{i,\rho_{h}(j)}&= hG^{-}_{i,j}\\
G^{-}_{i,\rho_h(j)}&= hG^{+}_{i,j}
\end{array}\right .
\text{~if~} h \in G^-_{j,\rho_h(j)}.
\end{align}
In the first case, we deduce
$$m_{i,\rho_h(j)} =  \sum_{g\in G^{+}_{i,\rho_h(j)}}g(\alpha) -\sum_{g\in G^{-}_{i,\rho_h(j)}}g(\alpha) 
                  =  \sum_{g\in G^{+}_{i,j}}hg(\alpha) -\sum_{g\in G^{-}_{i,j}}hg(\alpha)
=h(m_{i,j});$$
in the second case, we get
$$m_{i\rho_h(j)} = \sum_{g\in G^{+}_{i,\rho_h(j)}}g(\alpha)
-\sum_{g\in G^{-}_{i,\rho_h(j)}}g(\alpha) = \sum_{g\in
  G^{-}_{i,j}}hg(\alpha) -\sum_{g\in
  G^{+}_{i,j}}hg(\alpha)=-h(m_{i,j}).$$ In other words, $h(M_j) = \pm
M_{\rho_h(j)}$, so $G$ permutes the columns of $M$ up to sign.
Secondly, the first column $M_1$ has entries 
$$\sum_{g \in G^{+}_{i,1}}g(\alpha)- \sum_{g \in
  G^{-}_{i,1}}g(\alpha), i = 1, \ldots,n.$$ Since $\alpha$ is a normal
element of the Galois extension $K/F$ with Galois group $G$, and since
$G = \sqcup^n_{i =1}G^{\pm}_{i,j}= \sqcup^n_{i =1}(G^{+}_{i,1} \sqcup
G^{-}_{i,1}) $ is a disjoint union, with all $G^{\pm}_{i,j}$ non-empty (by
the transitivity of the action), this set is $F$-linearly independent.

So Lemma \ref{signdet} applies, and we conclude that $K(y_1, \ldots,
y_n)=K(z_1,\dots,z_n)$; this implies that
$K(x_1,\dots,x_n)^G=F(y_1,\dots,y_n)$.  This finishes the proof in the
transitive case; the proof in the general case follows as in
Theorem~\ref{nonamenonsign}.
\end{proof}

\begin{example}
Let $K= \Q(\rho)$, where $\rho$ is a primitive $5$-th root of unity,
so that $K/\Q$ is Galois, with $\mathrm{Gal}(K/\Q) \cong C_4$.
Take $n=3$, assume $G\leq \mathrm{GL}(3,\Z)$ is generated by $$
\sigma = \begin{bmatrix}
0&-1&0\\
1&0&0\\
0&0&-1
\end{bmatrix}$$ and let $(x_1, x_2, x_3)$ be
indeterminates over $K$, on which $G$ acts as in Definition~\ref{Assumption};
this action is not transitive.
One can also verify that $G$ is isomorphic to $\mathrm{Gal}(K/\Q)$;
$\sigma(\rho)=\rho^2$.  This implies that $\sigma^k(\rho)=\rho^{2^k}$, $k=0,1,2,3$.
In particular, $\sigma^3(\rho)=\rho^{8}=\rho^3$.

We choose $\rho$ as our normal element of the extension $K/\Q$.
(A primitive $p$th root of unity is a normal element for the extension over $\Q$ that it generates.)

In this example, note that $G^+_{1,1}=G^+_{2,2}=\{\mathrm{id}\}$, $G^-_{1,1}=G^-{2,2}=\{\sigma^2\}$, $G^+_{1,2}=G^-_{2,1}=\{\sigma\}$, and $G^-_{1,2}=G^+_{2,1}=\{\sigma^3\}$. Also, $G^+_{3,3}=\langle\sigma^2\rangle$ and $G^-_{3,3}=\sigma G^+_{3,3}=
\langle \sigma,\sigma^3\}$.

With $(z_1,z_2,z_3)$ and $(y_1,y_2,y_3)$
defined as before, the matrix $M^*$ giving the coordinates of
$(1,y_1,y_2,y_3)$ on the basis $(1,z_1,z_2,z_3)$ is
$$M^*= \begin{bmatrix}
1 &0&0&0\\
\rho^3+\rho^4 & \rho -\rho^4 & \rho^2 -\rho^3 & 0\\
\rho^2+\rho^4 & \rho^3 -\rho^2 & \rho -\rho^4 &0 \\
\rho^2+\rho^3 &0 &0 & \rho - \rho^2 -\rho^3+\rho^4
\end{bmatrix}.
$$ Remark that due to the non-transitivity of the action of $G$, the
bottom-right $3 \times 3$ submatrix of $M^*$, while invertible, does
not satisfy the assumptions of Lemma~\ref{signdet} (this matrix is
block diagonal, with blocks corresponding to $K(z_1,z_2)$ and
$K(z_3)$, for which the lemma applies).
\end{example}

\bibliographystyle{plain}
\bibliography{bibliography}

\end{document}